\newcommand{\aspas}[1]{``{#1}''}
\theoremstyle{definition}
\newtheorem{theorem}{Theorem}[section]
\newtheorem{example}[theorem]{Example}
\newtheorem{proposition}[theorem]{Proposition}
\newtheorem{definition}[theorem]{Definition}
\newtheorem{remark}[theorem]{Remark}
\numberwithin{equation}{section}
\begin{document}

\renewcommand{\bf}{\bfseries}
\renewcommand{\sc}{\scshape}

\newcommand{\TC}{\text{TC}}

\title[The \'{e}tale sectional number is either 1 or infinity]%
{The \'{e}tale sectional number is either 1 or infinity}
\author{Cesar A. Ipanaque Zapata}
\address{Departamento de Matem\'atica, IME-Universidade de S\~ao Paulo, Rua do Mat\~ao, 1010 CEP: 05508-090, S\~ao Paulo, SP, Brazil}
\curraddr{School of Mathematics, University of Minnesota,  
127 Vincent Hall, 206 Church St. SE, Minneapolis MN 55455, USA}
\email{cesarzapata@usp.br}

\subjclass[2010]{Primary 55M30; Secondary 18F10}    

\keywords{Local homeomorphism (\'{e}tale map), (\'{e}tale) sectional number, (\'{e}tale) topological complexity, locally sectionable, locally contractible}

\begin{abstract} In this work, we show that the \'{e}tale sectional number (\text{\'{E}tale-sec$(-)$}), i.e., the sectional number in the category of topological spaces with the \'{e}tale quasi Grothendieck topology (as defined in arXiv:2410.22515), is either 1 or infinity. Specifically, given a continuous map $f:X\to Y$, we demonstrate that \[\text{\'{E}tale-sec$(f)$}=\begin{cases}
     1,&\hbox{ if  $f$ is locally sectionable,}\\
     \infty,&\hbox{ if  $f$ is not locally sectionable.}
    \end{cases} \]  Additionally, for a path-connected space $X$, the \'{e}tale topological complexity satisfies \[\text{TC}_{\text{\'{e}tale}}(X)=\begin{cases}
     1,&\hbox{ if  $X$ is locally contractible,}\\
     \infty,&\hbox{ if  $X$ is not locally contractible.}
    \end{cases} \] These results provide a way to understand the \aspas{complexity} of maps and spaces within the context of the  \'{e}tale quasi Grothendieck topology, a structure that considers local behavior of maps and spaces. The classification into values of 1 or infinity reflects a dichotomy in the local geometric structure of the map or space, with the presence or absence of local sections or contractibility significantly influencing the outcome.
\end{abstract}

\maketitle



\section{Introduction}\label{secintro}
In this article, a \aspas{space} refers to a topological space, and a \aspas{map} always denotes a continuous map. Fibrations are considered in the Hurewicz sense. Moreover, the symbol \aspas{$\hookrightarrow$} denotes the inclusion map.

\medskip Consider a map $f:X\to Y$. Given a subset $A$ of $Y$, we say that a map $s:A\to X$ is a local section of $f$ if $f\circ s=\mathrm{incl}_A$, where $\mathrm{incl}_A:A\hookrightarrow Y$ is the inclusion map. The usual \textit{sectional number} $\text{sec}(f)$ is the least integer~$m$ such that $Y$ can be covered by $m$ open subsets, each of which admits a local section of~$f$. We set $\text{sec}(f)=\infty$ if no such $m$ exists \cite{schwarz1966}, \cite{berstein1961}. 

\medskip Recently, in \cite{zapata2024}, the author establishes the foundations of sectional theory (\cite{schwarz1966}, \cite{berstein1961}) within category theory. This is achieved by introducing a numerical invariant that measures the minimal number of certain local sections of a morphism in a category with a topology. 

\medskip In this work, for a map $f:X\to Y$, we study the sectional number of $f$ in the category of spaces with the \'{e}tale quasi Grothendieck topology. This numerical invariant is called the \'{e}tale sectional number of $f$, denoted by $\text{\'{E}tale-sec$(f)$}$ (see Definition~\ref{defn:etale-sec-number}).

\medskip In more detail, an \textit{\'{e}tale map} refers a local homeomorphism. That is, $\varphi:A\to Y$ is an \'{e}tale map if and only if for each point $a\in A$, there exists an open subset $O$ of $A$ such that $a\in O$, $\varphi(O)$ is an open subset of $Y$, and the restriction map $\varphi_{\mid}:O\to \varphi(O)$ is a homeomorphism (cf. \cite[p. 88]{maclane1992}).

\medskip The \textit{\'{e}tale sectional number} of a map $f:X\to Y$, denoted by $\text{\'{E}tale-sec$(f)$}$, is the least positive integer $m$ such that there exist \'{e}tale maps $\varphi_1:A_1\to Y,\ldots,\varphi_m:A_m\to Y$ such that $Y=\varphi_1(A_1)\cup\cdots\cup \varphi_m(A_m)$ and for each $i=1,\ldots,m$, there exists a map $\sigma_i:A_i\to X$ satisfying $f\circ\sigma_i=\varphi_i$ (see Definition~\ref{defn:etale-sec-number}). We set $\text{\'{E}tale-sec$(f)$}=\infty$ if no such integer $m$ exists. 

\medskip The main results of this paper are as follows:

\medskip\textbf{Theorem 2.2}  Let $f:X\to Y$ be a map. \begin{enumerate}
     \item[(1)] If $\text{\'{E}tale-sec$(f)$}<\infty$, then \[\text{\'{E}tale-sec$(f)$}=1.\]
     \item[(2)] We have \[\text{\'{E}tale-sec$(f)$}=\begin{cases}
     1,&\hbox{ if  $f$ is locally sectionable,}\\
     \infty,&\hbox{ if  $f$ is not locally sectionable.}
    \end{cases} \] 
 \end{enumerate} 
 
 \medskip In addition, we have the following result for the \'{e}tale topological complexity.

\medskip\textbf{Theorem 2.5} Let $X$ be a path-connected space. We have \[\text{TC}_{\text{\'{e}tale}}(X)=\begin{cases}
     1,&\hbox{ if  $X$ is locally contractible,}\\
     \infty,&\hbox{ if  $X$ is not locally contractible.}
    \end{cases} \] 
 
Motivated by these results, we propose the following question.

\medskip\textbf{Open question:} Is there a null-homotopic surjective \'{e}tale map $\varphi:A\twoheadrightarrow S^n$ over the $n$-sphere ($n\geq 2$), with total space $A$ being Hausdorff and path-connected? 


\section{\'{E}tale sectional number}\label{sec:etaleTC}
 In this section, we recall the notion of \'{e}tale sectional number and prove that this invariant is either one or infinity.  We adopt the notation used in \cite{zapata2024}. 
 
\medskip  An \textit{\'{e}tale map} is a local homeomorphism (in particular, it is an open map), meaning that $\varphi:A\to Y$ is an \'{e}tale map if and only if for each point $a\in A$, there exists an open subset $O$ of $A$ such that $a\in O$, $\varphi(O)$ is an open subset of $Y$, and the restriction map $\varphi_{\mid}:O\to \varphi(O)$ is a homeomorphism (cf. \cite[p. 88]{maclane1992}). Note that if $\varphi:A\to Y$ is an \'{e}tale map and $U$ is an open subset of $A$, then the restriction map $\varphi_{\mid U}:U\to Y$ is also an \'{e}tale map. 

 \medskip In \cite{zapata2024}, the author introduced the notion of \'{e}tale sectional number.

\begin{definition}[\'{E}tale sectional number]\label{defn:etale-sec-number}
 The \textit{\'{e}tale sectional number} of a map $f:X\to Y$ is, denoted by $\text{\'{E}tale-sec$(f)$}$, is the least positive integer $m$ such that there exist \'{e}tale maps $\varphi_1:A_1\to Y,\ldots,\varphi_m:A_m\to Y$ such that $Y=\varphi_1(A_1)\cup\cdots\cup \varphi_m(A_m)$, and for each $i=1,\ldots,m$, there exists a map $\sigma_i:A_i\to X$ satisfying $f\circ\sigma_i=\varphi_i$, that is, the following diagram commutes:
\begin{eqnarray*}
\xymatrix{ X \ar[rr]^{f} & &Y  \\
        &  A_i\ar@{-->}[lu]^{\sigma_i}\ar@{-->}[ru]_{\varphi_i}^{\text{\'{e}t}} & } 
\end{eqnarray*}            
\end{definition}

\medskip From Definition~\ref{defn:etale-sec-number}, note that $\text{\'{E}tale-sec$(f)$}=1$ if and only if there exists a surjective \'{e}tale map $\varphi:A\twoheadrightarrow Y$ (in \cite[Definition 2.9, p. 840]{carchedi2013}, the author refers to it as an \textit{\'{e}tale cover} of $X$) together with a map $\sigma:A\to X$ satisfying $f\circ\sigma=\varphi$.

\medskip We say that $f:X\to Y$ is \textit{locally sectionable} if,  for each point $y\in Y$, there exists an open subset $U$ of $Y$  containing $y$ such that there exists a local section $\sigma:U\to X$ of $f$. Equivalently, there exists an open covering  $\{U_\lambda\}_{\lambda\in\Lambda}$ of $Y$ such that for each $\lambda$, there exists a local section $\sigma_\lambda:U_\lambda\to X$ of $f$. Note that if $f$ is locally sectionable, then $f$ is surjective.

\medskip Note that if $\text{\'{E}tale-sec$(f)$}<\infty$, then $f$ is locally sectionable. We will show that the reverse implication holds. In contrast, if $f$ is locally sectionable, it does not imply that the sectional number $\text{sec}(f)<\infty$ in general. 

\medskip We establish and prove our first main result.

\begin{theorem}\label{thm:main-theorem}
 Let $f:X\to Y$ be a map. \begin{enumerate}
     \item[(1)] If $\text{\'{E}tale-sec$(f)$}<\infty$, then \[\text{\'{E}tale-sec$(f)$}=1.\]
     \item[(2)] We have \[\text{\'{E}tale-sec$(f)$}=\begin{cases}
     1,&\hbox{ if  $f$ is locally sectionable,}\\
     \infty,&\hbox{ if  $f$ is not locally sectionable.}
    \end{cases} \] 
 \end{enumerate} 
\end{theorem}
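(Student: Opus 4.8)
The plan is to reduce everything to the observation that a disjoint union of étale maps is again an étale map, so that several overlapping local sections can be glued into a single étale cover admitting one section. Concretely, suppose first that $f$ is locally sectionable, witnessed by an open cover $\{U_\lambda\}_{\lambda\in\Lambda}$ of $Y$ together with local sections $\sigma_\lambda:U_\lambda\to X$, so $f\circ\sigma_\lambda=\mathrm{incl}_{U_\lambda}$. I would set $A=\bigsqcup_{\lambda\in\Lambda}U_\lambda$ (the topological disjoint union) and let $\varphi:A\to Y$ be the map whose restriction to the $\lambda$-th summand is $\mathrm{incl}_{U_\lambda}$. Since each $U_\lambda$ is open in $Y$ and the inclusion of an open subset is a local homeomorphism onto an open set, and since being étale is a local property that is preserved under taking disjoint unions of the domains, $\varphi$ is an étale map; it is surjective because the $U_\lambda$ cover $Y$. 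Finally, define $\sigma:A\to X$ to be $\sigma_\lambda$ on the $\lambda$-th summand; this is continuous because $A$ carries the disjoint-union topology, and $f\circ\sigma=\varphi$ by construction. Hence $\text{\'{E}tale-sec}(f)=1$.

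For the converse direction of part (2), and for part (1), I would argue that $\text{\'{E}tale-sec}(f)<\infty$ already forces $f$ to be locally sectionable: if $\varphi_1:A_1\to Y,\dots,\varphi_m:A_m\to Y$ are étale maps covering $Y$ with lifts $\sigma_i:A_i\to X$, then for any $y\in Y$ pick $i$ and a point $a\in A_i$ with $\varphi_i(a)=y$; choosing an open $O\ni a$ in $A_i$ on which $\varphi_i$ restricts to a homeomorphism onto the open set $\varphi_i(O)\ni y$, the composite $\sigma_i\circ(\varphi_i|_O)^{-1}:\varphi_i(O)\to X$ is a continuous local section of $f$ over the open neighborhood $\varphi_i(O)$ of $y$. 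Thus $\text{\'{E}tale-sec}(f)<\infty\iff f$ locally sectionable, and combining with the previous paragraph gives $\text{\'{E}tale-sec}(f)<\infty\implies \text{\'{E}tale-sec}(f)=1$, which is (1); statement (2) follows since if $f$ is not locally sectionable then no finite $m$ can work, so the value is $\infty$ by definition.

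I expect no serious obstacle here — the only points that need care are purely topological bookkeeping: checking that an arbitrary (not necessarily finite or disjoint) union of open subsets can legitimately be replaced by a genuine disjoint union whose structure map is étale, and that continuity of $\sigma$ and $\varphi$ out of the disjoint union follows from the universal property of the coproduct in $\mathbf{Top}$. One should also note explicitly that the index set $\Lambda$ may be infinite, which is exactly why this works in the étale topology but not for the classical sectional number: étale maps are allowed arbitrarily many "sheets," so the finiteness constraint that makes $\text{sec}(f)$ interesting simply evaporates. I would close by remarking that this is precisely the phenomenon behind the stated dichotomy, and that the same disjoint-union construction will be reused verbatim in the proof of Theorem~\ref{thm:main-theorem}'s companion statement about $\text{TC}_{\text{\'{e}tale}}$.
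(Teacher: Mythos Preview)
Your proposal is correct and uses the same two ingredients as the paper's proof: the disjoint-union construction $A=\coprod_\lambda U_\lambda$ to produce a single \'etale cover from an open cover with local sections, and the extraction of local sections via $(\varphi_i|_O)^{-1}$ from an \'etale map with a lift. The only difference is organizational: the paper proves (1) directly by taking the disjoint union of the $A_i$ themselves, whereas you route (1) through the equivalence $\text{\'Etale-sec}(f)<\infty\iff f$ locally sectionable, which is slightly more economical since it avoids repeating the disjoint-union argument.
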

\begin{proof}
\noindent\begin{enumerate} 
 \item[(1)] Suppose that $\text{\'{E}tale-sec$(f)$}=m<\infty$. Let $\varphi_1:A_1\to Y,\ldots,\varphi_m:A_m\to Y$ be \'{e}tale maps such that $Y=\varphi_1(A_1)\cup\cdots\cup \varphi_m(A_m)$, and for each $i=1,\ldots,m$, there exists a map $\sigma_i:A_i\to X$ satisfying $f\circ\sigma_i=\varphi_i$. Take the topological sum $A=\coprod_{i=1}^{m}A_{i}\times\{i\}$, the map $\varphi:\coprod_{i=1}^{m}A_{i}\times\{i\}\to Y$ by $\varphi(x,i)=\varphi_i(x)$, and the map $\sigma:\coprod_{i=1}^{m}A_{i}\to X$ by $\sigma(x,i)=\sigma_i(x)$ (i.e., $\varphi_{| A_i}=\varphi_i$ and $\sigma_{| A_i}=\sigma_i$ for each $i$). Note that $\varphi$ is surjective (here we use that $Y=\bigcup_{i=1}^{m}\varphi_i(A_i)$), it is an \'{e}tale map (recall that each map $\varphi_i:A_i\to Y$ is an \'{e}tale map), and $f\circ\sigma=\varphi$ (here we use that $f\circ\sigma_i=\varphi_i$ for each $i$). Therefore, $\text{\'{E}tale-sec$(f)$}=1$. 

 \item[(2)]  Suppose that $f$ is locally sectionable, i.e., there exists an open covering  $\{U_\lambda\}_{\lambda\in\Lambda}$ of $Y$ such that for each $\lambda$, there exists a local section $\sigma_\lambda:U_\lambda\to X$ of $f$. Take the topological sum $A=\coprod_{\lambda\in\Lambda}U_{\lambda}\times\{\lambda\}$, the map $\varphi:\coprod_{\lambda\in\Lambda}U_{\lambda}\times\{\lambda\}\to Y$ given by $\varphi(x,\lambda)=x$, and the map $\sigma:\coprod_{\lambda\in\Lambda}U_{\lambda}\times\{\lambda\}\to X$ given by $\sigma(x,\lambda)=\sigma_\lambda(x)$ (i.e., $\varphi_{| U_\lambda}=\text{incl}_{U_\lambda}$ and $\sigma_{| U_\lambda}=\sigma_\lambda$ for each $\lambda\in\Lambda$). Note that $\varphi$ is surjective (here we use that $Y=\bigcup_{\lambda\in\Lambda}U_\lambda$), it is an \'{e}tale map (recall that each inclusion $\text{incl}_{U_\lambda}:U_\lambda\hookrightarrow Y$ is an \'{e}tale map), and satisfies the equality $f\circ\sigma=\varphi$ (here we use that each $\sigma_\lambda$ is a local section of $f$). Therefore, $\text{\'{E}tale-sec$(f)$}=1$.
 
 Now, suppose $f$ is not locally sectionable. We will show that $\text{\'{E}tale-sec$(f)$}=\infty$. We proceed by contradiction. Suppose that $\text{\'{E}tale-sec$(f)$}<\infty$. Then, by Item (1)$, \text{\'{E}tale-sec$(f)$}=1$. Let $\varphi:A\twoheadrightarrow Y$ be a surjective \'{e}tale map,  together with a map $\sigma:A\to X$ satisfying $f\circ\sigma=\varphi$. For each $y\in Y$, let $a\in A$ be such that $\varphi(a)=y$ (here we use that $\varphi$ is surjective). Since $\varphi$ is an \'{e}tale map, there exists an open subset $O$ of $A$ such that $a\in O$, $U_y=\varphi(O)$ is an open subset of $Y$, and the restriction map $\varphi_|:O\to \varphi(O)$ is a homeomorphism. Note that $y\in \varphi(O)=U_y$, and \[\sigma\circ \text{incl}_{O}\circ (\varphi_|)^{-1}\] is a local section of $f$, where $\text{incl}_{\varphi(O)}:\varphi(O)\hookrightarrow Y$ and $\text{incl}_{O}:O\hookrightarrow A$ are the inclusion maps, and $(\varphi_|)^{-1}$ is the inverse of $\varphi_|:O\to \varphi(O)$. Hence, we obtain an open covering $\{U_y\}_{y\in Y}$ of $Y$ such that for each $y$, there exists a local section $\sigma_y:U_y\to X$ of $f$. Therefore,  $f$ is locally sectionable, which contradicts the assumption that $f$ is not locally sectionable. 
  \end{enumerate} 
\end{proof}

\medskip Let $X$ be a space. Let $X^{[0,1]}=\{\gamma:[0,1]\to X|~ \text{ $\gamma$ is a continuous path}\}$ be the space of all paths in $X$, topologized with the compact-open topology. Consider the fibration $e_2^X:X^{[0,1]}\to X\times X$ given by \[e_2^X(\gamma)=(\gamma(0),\gamma(1)).\] Given a point $x_0\in X$, consider the subspace $P\left(X,x_0\right)=\{\gamma\in X^{[0,1]}:~\gamma(0)=x_0\}$. Note that $P\left(X,x_0\right)$ is contractible. Consider the fibration $e_1^X:P\left(X,x_0\right)\to X$ given by \[e_1^X(\gamma)=\gamma(1).\] 

\medskip The following remark says that the Cartesian product of \'{e}tale maps is an \'{e}tale map. 

\begin{remark}\label{rem:product-etale}
 Let $f:X\to Y$ and $g:Z\to W$ be \'{e}tale maps. The Cartesian product $f\times g:X\times Z\to Y\times W$ is an \'{e}tale map. Indeed, let $(x,z)\in X\times Z$ and consider open subsets $U\subset X$ and $V\subset Z$ with $x\in U$ and $z\in V$, such that $f(U)\subset Y$ and $g(V)\subset W$ are open subsets and $f_\mid:U\to f(U)$ and $g_\mid:V\to g(V)$ are homeomorphisms. Note that the restriction of $f\times g$ over $U\times V$ is given by the Cartesian product $f_\mid\times g_\mid:U\times V\to f(U)\times g(V)$. 
\end{remark}

\medskip We have the following statement.

\begin{proposition}\label{prop:e2x-e1x}
    Let $X$ be a space. We have the following: \begin{enumerate}
        \item[(1)] $\text{\'{E}tale-sec$(e_2^X)$}=1$ if and only if $\text{\'{E}tale-sec$(e_1^X)$}=1$.
        \item[(2)] $\text{\'{E}tale-sec$(e_2^X)$}=\infty$ if and only if $\text{\'{E}tale-sec$(e_1^X)$}=\infty$. 
    \end{enumerate} 
\end{proposition}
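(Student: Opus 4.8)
The plan is to prove both equivalences simultaneously by establishing, via Theorem~\ref{thm:main-theorem}, that it suffices to relate \emph{local sectionability} of $e_2^X$ and $e_1^X$. Indeed, by Theorem~\ref{thm:main-theorem}(2), $\text{\'{E}tale-sec$(e_i^X)$}$ equals $1$ precisely when $e_i^X$ is locally sectionable and equals $\infty$ otherwise, so (1) and (2) are logically equivalent to each other and both reduce to the single claim: $e_2^X$ is locally sectionable if and only if $e_1^X$ is locally sectionable. So the real content is a statement about the existence of local sections of the two path-space fibrations, with no sectional numbers or \'{e}tale covers appearing in the argument at all.

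For the direction ``$e_2^X$ locally sectionable $\Rightarrow$ $e_1^X$ locally sectionable'': given a local section $s$ of $e_2^X$ over an open set $W\subseteq X\times X$, I want to produce a local section of $e_1^X$ over an open set of $X$. Fix the basepoint $x_0$. The natural candidate is to look at an open neighbourhood $U$ of a point $x\in X$ such that $\{x_0\}\times U\subseteq W$ (this requires $(x_0,x)\in W$); then for $u\in U$ the path $s(x_0,u)$ runs from $x_0$ to $u$, which is exactly what a section of $e_1^X:P(X,x_0)\to X$ must supply. The subtlety is that $W$ need not contain any point of the form $(x_0,x)$, so one cannot cover all of $X$ this way directly; but one can instead use a section over a neighbourhood of the diagonal. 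More robustly: if $e_2^X$ is locally sectionable, cover $X\times X$ by opens $W_\lambda$ with sections $s_\lambda$; for a fixed $x$, pick $\lambda$ with $(x_0,x)\in W_\lambda$, choose open $U\ni x$ with $\{x_0\}\times U\subseteq W_\lambda$, and set $\tau(u)=s_\lambda(x_0,u)$, which is continuous $U\to P(X,x_0)$ and satisfies $e_1^X\circ\tau=\mathrm{incl}_U$. This handles every $x\in X$, giving an open cover of $X$ with local sections of $e_1^X$.

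For the converse ``$e_1^X$ locally sectionable $\Rightarrow$ $e_2^X$ locally sectionable'': this is the more delicate direction and I expect it to be the main obstacle, because a section of $e_1^X$ only gives paths emanating from the single basepoint $x_0$, whereas $e_2^X$ demands paths between arbitrary nearby pairs of points. The key observation is that the basepoint $x_0$ is inessential: the same argument run with $e_1^X$ replaced by $e_1^{X,x}:P(X,x)\to X$ for an arbitrary basepoint $x$ works, and moreover local sectionability of $e_1^X$ near $x_0$ propagates — if there is a local section of $e_1^X$ near a point, one gets, by concatenation of paths, local sections of $e_2^X$ near the corresponding diagonal point. Concretely, suppose $\tau:U\to P(X,x_0)$ is a local section of $e_1^X$ with $x\in U$; then for $u,v\in U$ one can concatenate the reverse of $\tau(u)$ with $\tau(v)$ to obtain a path from $u$ to $v$, and $(u,v)\mapsto \overline{\tau(u)}\ast\tau(v)$ is a continuous section of $e_2^X$ over $U\times U$, using that path reversal and concatenation are continuous operations on path spaces. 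Ranging over an open cover $\{U_\lambda\}$ of $X$ witnessing local sectionability of $e_1^X$ (one section per $U_\lambda$, each with \emph{its own} basepoint allowed — which is legitimate since the property of being locally sectionable is unchanged under moving the basepoint within a path component, and $X$ is a space we may take this over all of $X$ by the basepoint-independence remark), the sets $\{U_\lambda\times U_\mu\}$ need a bit of care: the concatenation trick as stated needs both arguments in the \emph{same} $U_\lambda$, so the resulting cover of $X\times X$ is $\{U_\lambda\times U_\lambda\}_\lambda$ together with the observation that these do cover the diagonal but not all of $X\times X$ — however, $e_2^X$ being locally sectionable only requires a local section near each point of $X\times X$, and for a point $(a,b)$ off the diagonal one must instead use sections $\tau_\lambda$ near $a$ and $\tau_\mu$ near $b$ sharing a common basepoint, which one arranges by choosing the basepoints compatibly (possible whenever $a$ and $b$ lie in the same path component; if they do not, then $e_2^X$ simply has no section near $(a,b)$ and likewise $X$ is disconnected so one treats path components separately, or one notes the hypothesis in practice is applied to path-connected $X$). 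The clean way to organize this is: reduce to $X$ path-connected by working one path component at a time; fix one basepoint $x_0$; observe $e_1^X$ locally sectionable gives, for each $a\in X$, a neighbourhood $U_a$ and $\tau_a:U_a\to P(X,x_0)$; then $(u,v)\mapsto \overline{\tau_a(u)}\ast\tau_b(v)$ is a local section of $e_2^X$ over $U_a\times U_b$, and these cover $X\times X$. The main technical point to verify carefully is the continuity of the concatenation map $P(X,x_0)\times P(X,x_0)\to X^{[0,1]}$, $(\alpha,\beta)\mapsto\overline\alpha\ast\beta$, in the compact-open topology, which is standard but should be cited or briefly justified.
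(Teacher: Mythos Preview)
Your argument is correct. You take a genuinely different route from the paper: you invoke Theorem~\ref{thm:main-theorem}(2) at the outset to reduce both (1) and (2) to the single equivalence ``$e_2^X$ is locally sectionable $\iff$ $e_1^X$ is locally sectionable'', and then argue entirely with open covers and local sections. The paper instead proves (1) directly at the level of \'{e}tale covers --- using a pullback argument (citing an external result from \cite{zapata2024}) for the forward implication, and the product \'{e}tale map $\varphi\times\varphi:A\times A\to X\times X$ together with the same path-concatenation formula for the reverse --- and only afterwards deduces (2) from (1) via Theorem~\ref{thm:main-theorem}. Your approach is more self-contained (no external citation needed) and arguably more transparent, since it isolates the elementary path-space content; the paper's approach is more conceptual in its use of the pullback square and stays closer to the \'{e}tale formalism. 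One small clean-up: your discussion of path components is unnecessary, since local sectionability of either $e_1^X$ or $e_2^X$ forces surjectivity and hence path-connectedness of $X$ outright --- there is no need to ``work one component at a time''.
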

\begin{proof}
\noindent\begin{enumerate}
    \item[(1)]  Let $x_0\in X$. The following diagram 
\begin{eqnarray*}
\xymatrix{  P\left(X,x_0\right) \ar@{^{(}->}[r]^{ } \ar[d]_{e_1^X} & X^{[0,1]} \ar[d]^{e_2^X}  \\ 
       X  \ar[r]_{i_0} &  X\times X }
\end{eqnarray*} is a pullback, where $i_0(x)=(x_0,x)$. If $\text{\'{E}tale-sec$(e_2^X)$}=1$, then $\text{\'{E}tale-sec$(e_1^X)$}=1$ (see \cite[Theorem 3.25(1), p. 35]{zapata2024}).  

Now, suppose that $\text{\'{E}tale-sec$(e_1^X)$}=1$. Let $\varphi:A\twoheadrightarrow X$ be a surjective \'{e}tale map together with a map $\sigma:A\to P\left(X,x_0\right)$ satisfying $e_1^X\circ\sigma=\varphi$. Note that the map $\varphi\times \varphi:A\times A\twoheadrightarrow X\times X$ is surjective, and by Remark~\ref{rem:product-etale}, it is also an \'{e}tale map. The map $s:A\times A\to X^{[0,1]}$ given by \[s(a,a')(t)=\begin{cases}
    \sigma(a,1-2t),&\hbox{ if $0\leq t\leq 1/2$,}\\
    \sigma(a',2t-1),&\hbox{ if $1/2\leq t\leq 1$,}
\end{cases}\] satisfies the equality $e_2^X\circ s=\varphi\times \varphi$. Therefore, $\text{\'{E}tale-sec$(e_2^X)$}=1$.
\item[(2)] It follows from Theorem~\ref{thm:main-theorem} together with Item (1). 
\end{enumerate}
\end{proof}

\medskip A space $X$ is called \textit{locally contractible} or \textit{locally categorical} if for each $x\in X$, there exists an open subset $U$ of $X$ such that $U$ is \textit{contractible in $X$} or \textit{categorical} \cite[p. 331]{james1978}, i.e., the inclusion map $\text{incl}_U:U\hookrightarrow X$ is null-homotopic (\cite[p. 215]{farber2003}, cf. \cite[Theorem A.7, p. 525]{hatcher2002}). Equivalently, for each open subset $V$ of $X$ and each point $x\in V$, there exists an open subset $U$ of $X$ such that $x\in U\subset V$ and the inclusion map $\text{incl}_U:U\hookrightarrow X$ is null-homotopic. Similarly, there exists an open covering $\{U_\lambda\}_{\lambda\in\Lambda}$ of $X$ such that each inclusion map $\text{incl}_{U_\lambda}:U_\lambda\hookrightarrow X$ is null-homotopic. Such a covering $\{U_\lambda\}_{\lambda\in\Lambda}$ is called \textit{categorical} \cite[p. 331]{james1978}.

\medskip Note that if $e_1^X$ is locally sectionable, then $X$ is locally contractible. The other implication holds whenever $X$ is path-connected. 

\medskip In \cite[Example 3.11]{zapata2024}, the invariant $\text{\'{E}tale-sec$(e_2^X)$}$ is called the \textit{\'{e}tale topological complexity} of $X$ and is denoted by \[\text{TC}_{\text{\'{e}tale}}(X)=\text{\'{E}tale-sec$(e_2^X)$}.\]

\medskip Theorem~\ref{thm:main-theorem} together with Proposition~\ref{prop:e2x-e1x} imply the following result for the \'{e}tale topological complexity.

\begin{theorem}\label{cor:e1x-locally-contrac}
Let $X$ be a path-connected space. We have \[\text{TC}_{\text{\'{e}tale}}(X)=\begin{cases}
     1,&\hbox{ if  $X$ is locally contractible,}\\
     \infty,&\hbox{ if  $X$ is not locally contractible.}
    \end{cases} \]      
\end{theorem}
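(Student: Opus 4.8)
The plan is to deduce Theorem~\ref{cor:e1x-locally-contrac} directly from the machinery already assembled, namely Theorem~\ref{thm:main-theorem} and Proposition~\ref{prop:e2x-e1x}, together with the observation (recorded just before the statement) that for a path-connected space $X$, the fibration $e_1^X:P(X,x_0)\to X$ is locally sectionable if and only if $X$ is locally contractible. Since $\text{TC}_{\text{\'{e}tale}}(X)$ is by definition $\text{\'{E}tale-sec}(e_2^X)$, and since $\text{\'{E}tale-sec}$ of any map takes only the values $1$ and $\infty$ by Theorem~\ref{thm:main-theorem}(1), it suffices to determine exactly when $\text{\'{E}tale-sec}(e_2^X)=1$.

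First I would establish the easy direction of the auxiliary equivalence: if $e_1^X$ admits a local section on an open neighbourhood $U$ of $x$, say $\sigma_U:U\to P(X,x_0)$ with $e_1^X\circ\sigma_U=\mathrm{incl}_U$, then the adjoint homotopy $H(x,t)=\sigma_U(x)(1-t)$ is a homotopy from $\mathrm{incl}_U$ to the constant map at $x_0$, so $U$ is contractible in $X$; hence $X$ is locally contractible. Conversely, if $X$ is locally contractible and path-connected, and $U$ is an open set with a null-homotopy $H:U\times[0,1]\to X$ from $\mathrm{incl}_U$ to a constant map at some point $x_1\in X$, I would first pick a path $\alpha$ from $x_0$ to $x_1$ (using path-connectedness), then define $\sigma_U(x)$ to be the concatenation of $\alpha$ with the path $t\mapsto H(x,t)$; this gives a continuous local section of $e_1^X$ over $U$, so $e_1^X$ is locally sectionable. (These are exactly the two implications alluded to in the remark preceding the theorem; if that remark is taken as given, this paragraph can be compressed to a citation of it.)

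Next I would run the main chain of equivalences. Suppose $X$ is locally contractible. Then $e_1^X$ is locally sectionable (by the previous paragraph, using path-connectedness), so $\text{\'{E}tale-sec}(e_1^X)=1$ by Theorem~\ref{thm:main-theorem}(2); then by Proposition~\ref{prop:e2x-e1x}(1), $\text{\'{E}tale-sec}(e_2^X)=1$, i.e.\ $\text{TC}_{\text{\'{e}tale}}(X)=1$. Conversely, suppose $X$ is not locally contractible. Then $e_1^X$ is not locally sectionable, so $\text{\'{E}tale-sec}(e_1^X)=\infty$ by Theorem~\ref{thm:main-theorem}(2); by Proposition~\ref{prop:e2x-e1x}(2), $\text{\'{E}tale-sec}(e_2^X)=\infty$, i.e.\ $\text{TC}_{\text{\'{e}tale}}(X)=\infty$. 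Since local contractibility and its negation exhaust the possibilities, the two cases together give the displayed formula.

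I do not expect a genuine obstacle here: the theorem is essentially a packaging of earlier results, and the only content beyond bookkeeping is the equivalence between local sectionability of $e_1^X$ and local contractibility of $X$. The one point requiring a little care is the use of path-connectedness in the converse direction of that equivalence — without it, a null-homotopy of $\mathrm{incl}_U$ need not land near $x_0$, so one must explicitly splice in a path from $x_0$ to the relevant point and check continuity of the resulting map into $P(X,x_0)$ with the compact-open topology. That check is routine (concatenation of paths is continuous, and $\alpha$ is fixed), so the whole argument is short.
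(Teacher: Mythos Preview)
Your proposal is correct and follows exactly the route the paper takes: the theorem is stated as an immediate consequence of Theorem~\ref{thm:main-theorem} and Proposition~\ref{prop:e2x-e1x}, together with the observation (recorded just before the statement) that $e_1^X$ is locally sectionable if and only if $X$ is locally contractible when $X$ is path-connected. The only slip is in your explicit construction of $\sigma_U$: since $H(x,0)=x$ and $H(x,1)=x_1$, you should concatenate $\alpha$ with the \emph{reverse} path $t\mapsto H(x,1-t)$ to obtain a path from $x_0$ to $x$; this is a trivial fix and does not affect the argument.
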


We present the following example.
 
\begin{example}
  Let $X$ be the shrinking wedge circles, i.e., it is the subspace $X\subset\mathbb{R}^2$ that is the union of the circles $C_n$ of radius $1/n$ and center $(1/n,0)$ for $n=1,2,\ldots$ (see \cite[Example 1.25, p. 49]{hatcher2002}). Note that $X$ is path-connected and is not locally contractible (because the point $(0,0)\in X$ does not have an open subset $U$ of $X$ such that the inclusion map $\text{incl}_U:U\hookrightarrow X$ is null-homotopic, see \cite[p. 215]{farber2003}). Hence, by Theorem~\ref{cor:e1x-locally-contrac}, \[\text{TC}_{\text{\'{e}tale}}(X)=\infty.\] 
  \end{example}

\medskip We have the following remark.

\begin{remark}\label{rem:nul-etale-locally-contrac}
 Given a path-connected space $X$ and $x_0\in X$: \begin{enumerate}
     \item[(1)] Suppose that $X$ is locally contractible, i.e., there exists an open covering $\{U_\lambda\}_{\lambda\in\Lambda}$ of $X$ such that each inclusion map $\text{incl}_{U_\lambda}:U_\lambda\hookrightarrow X$ is null-homotopic. Since $X$ is path-connected, we can suppose that each inclusion map $\text{incl}_{U_\lambda}:U_\lambda\hookrightarrow X$ is homotopic to the constant map $\overline{x}_0:U_\lambda\hookrightarrow X$, that is, there exists a null-homotopy $H^\lambda:U_\lambda\times [0,1]\to X$ such that $H^\lambda_0=\text{incl}_{U_\lambda}$ and $H^\lambda_1=\overline{x}_0$. 
 
 Similarly, from the proof of Theorem~\ref{thm:main-theorem}(2), we take the topological sum $A=\coprod_{\lambda\in\Lambda}U_\lambda\times\{\lambda\}$ together with the surjective \'{e}tale map $\varphi:A\to X$ given by $\varphi_{| U_\lambda}=\text{incl}_{U_\lambda}$. Furthermore, $\varphi$ is homotopic to the constant map $\overline{x}_0:A\to X$ because we have the null-homotopy $H:A\times [0,1]\to X$ given by $H_{| U_\lambda\times [0,1]}=H^\lambda$. Therefore, we obtain a null-homotopic surjective \'{e}tale map \[\varphi:A\twoheadrightarrow X.\]
 \item[(2)] From Item (1), note that $A$ is Hausdorff whenever $X$ is Hausdorff. However, the total space $A$ is not necessarily path-connected.     
 \end{enumerate}      
\end{remark}

On the other hand, we note that the universal covering map $q:\mathbb{R}\to S^1, r\mapsto q(r)=e^{2\pi i r}$, is a null-homotopic surjective \'{e}tale map over the $1$-sphere $S^1$, with the total space being path-connected.

\begin{remark}\label{rem:for higher-spheres}
 For $n\geq 2$, it is well known that there is no null-homotopic covering map on the $n$-dimensional sphere $S^n$. 
\end{remark}

However, the following example presents a null-homotopic \'{e}tale cover of  $S^n$.   

\begin{example}\label{exam:nulhomo-etale-cover-sphere}
Let $n\geq 1$. Fix $p=(p_1,\ldots,p_{n+1})$ and $q=(q_1,\ldots,q_{n+1})$, two distinct points of $S^n$, with $p_{n+1},q_{n+1}<0$. Consider the quotient space \[A=\left(S^n\setminus\{p\}\sqcup S^n\setminus\{q\}\right)/\sim,\] where the equivalence relation $\sim$ on the disjoint union of two copies of punctured spheres identifies every point $x=(x_1,\ldots,x_{n+1})$ of the upper hemisphere (i.e., $x_{n+1}>0$) of the first copy with the corresponding point of the upper hemisphere of the second copy. The two copies of a point $x=(x_1,\ldots,x_n,0)$ of the equator are not identified,  and they do not have any disjoint neighborhoods, so $A$ is not Hausdorff. Note that $A$ is contractible, and the natural map $\varphi:A\to S^n$ is a  surjective \'{e}tale map (i.e., it is an \'{e}tale cover of  $S^n$). Therefore, we obtain a null-homotopic \'{e}tale cover of  $S^n$.  
\[
\begin{tikzpicture}
\node at (-0.5,-3.2) {$p$};
\node at (0.5,-2.8) {$q$};

\draw[thick, dashed] (3,0) arc[start angle=0, end angle=180, radius=3];
\draw[thick] (-3.2,0) arc[start angle=180, end angle=360, radius=3.2];
\draw[thick] (-2.8,0) arc[start angle=180, end angle=360, radius=2.8];

\node at (-3.2, -2) {$S^n \setminus \{p\}$};
\node at (2, -1) {$S^n \setminus \{q\}$};

\node at (0, -4) {Quotient space $A=\left(S^n\setminus\{p\}\sqcup S^n\setminus\{q\}\right)/\sim$};

\end{tikzpicture}\]
\end{example}

Motivated by Remark~\ref{rem:nul-etale-locally-contrac} and Example~\ref{exam:nulhomo-etale-cover-sphere}, we propose the following question.

\medskip\textbf{Open question:} Is there a null-homotopic surjective \'{e}tale map $\varphi:A\twoheadrightarrow S^n$ over the $n$-sphere ($n\geq 2$), with total space $A$ being Hausdorff and path-connected? 

\medskip In light of \cite[Proposition 1, p. 381; Proposition 6, p. 389]{manfredo2016} together with Remark~\ref{rem:for higher-spheres}, $A$ must be non-compact or $\varphi$ must not satisfy the property of lifting arcs.

\section*{Acknowledgements}
The author would like to thank grant\#2023/16525-7, and grant\#2022/16695-7, S\~{a}o Paulo Research Foundation (FAPESP) for financial support.

\bibliographystyle{plain}

\begin{thebibliography}{10} 


\bibitem{berstein1961} Berstein I, Ganea T. (1962). The category of a map and a cohomology class. Fundamenta Mathematicae, 50 (3): 265-279. 

\bibitem{carchedi2013} Carchedi, D. (2013). An étalé space construction for stacks. Algebraic \& Geometric Topology, 13(2), 831-903.





\bibitem{farber2003} Farber, M. (2003). Topological complexity of motion planning. Discrete \& Computational Geometry, 29, 211-221.




\bibitem{hatcher2002} Hatcher, A. (2002). Algebraic Topology. Cambridge University Press.

\bibitem{james1978} James, I. M. (1978). On category, in the sense of Lusternik-Schnirelmann. Topology, 17(4), 331-348.

\bibitem{manfredo2016} Do Carmo, M. P. (2016). Differential geometry of curves and surfaces: revised and updated second edition. Courier Dover Publications.





\bibitem{maclane1992} Mac Lane, S., \& Moerdijk, I. (1992). Sheaves in geometry and logic: a first introduction to topos theory. Springer Book Archive-Mathematics.



\bibitem{schwarz1966} Schwarz A.S. (1966). The genus of fiber space. American Mathematical Society Translations. Series 2, 55: Eleven papers on topology and algebra 1966: 49-140.










\bibitem{zapata2024} Zapata, C. A. I. (2024). Sectional number of a morphism. arXiv preprint arXiv:2410.22515.

\end{thebibliography}

\end{document}